\newtheorem{theorem}{Theorem}[section]
\newtheorem{proposition}[theorem]{Proposition}
\newtheorem{lemma}[theorem]{Lemma}
\newtheorem{remark}[theorem]{Remark}
\newtheorem{example}[theorem]{Example}
\newtheorem*{acknowledgement}{Acknowledgement}
\newtheorem*{theorem*}{Theorem}
\theoremstyle{plain}
\theoremstyle{definition}
\newtheorem{definition}{Definition}[section]
\newtheorem{notation}{Notation}[section]
\numberwithin{equation}{section}
\newcommand{\R}{\mathbb{R}}  
\newcommand{\E}{\mathbb{E}} 
\newcommand{\Prob}{\mathbb{P}}
\newcommand{\Q}{\mathbb{Q}}
\newcommand{\Hei}{\mathbb{H}}
\newcommand{\WR}{W_0 \left(\mathbb{R}^2\right)}
\newcommand{\WH}{W_0 \left( \mathbb{H} \right)}
\DeclareMathOperator{\Span}{span}
\begin{document}


\title[Onsager-Machlup on Heisenberg group]{On the Onsager-Machlup functional for the Brownian motion on the Heisenberg group}

\author[Carfagnini]{Marco Carfagnini{$^{\dag }$}}
\address{ Department of Mathematics\\
University of California, San Diego\\
La Jolla, CA 92093-0112,  U.S.A.}
\email{mcarfagnini@ucsd.edu}

\author[Gordina]{Maria Gordina{$^{\dag }$}}
\thanks{\footnotemark {$\dag$} Research was supported in part by NSF Grant DMS-1954264.}
\address{ Department of Mathematics\\
University of Connecticut\\
Storrs, CT 06269,  U.S.A.}
\email{maria.gordina@uconn.edu}

\keywords{Diffusion processes, Onsager-Machlup functional, Heisenberg group, hypoelliptic operator}

\subjclass[2010]{Primary 58J65; Secondary  60J60, 60G17, 35R03, 53C17}

\begin{abstract}
Onsager-Machlup functionals are used to describe the dynamics of a continuous stochastic process. For a Markov process whose generator is an elliptic operator, such functionals have been studied extensively. We describe the Onsager-Machlup functional for a horizontal Brownian motion on a Heisenberg group. This is a Markov process whose generator is a hypoelliptic differential operator. Unlike in the elliptic case we do not rely on  tools from differential geometry such as comparison theorems or curvature bounds as these are not easily available in the sub-Riemannian setting. In addition, we study fine properties of trajectories of the horizontal Brownian motion, including a new notion of horizontal continuous curves.
\end{abstract}

\maketitle

\tableofcontents

\section{Introduction}\label{Intro}

In this article we study small fluctuations of stochastic processes in terms of Onsager-Machlup functionals. These functionals are used to describe asymptotic behavior of the probability that the paths of a given stochastic process are contained in a small tube around a given curve. The purpose of this paper is to find an Onsager-Machlup functional and corresponding asymptotics in a hypoelliptic setting.

Onsager-Machlup functionals were introduced in  \cite{MachlupOnsager1953a, MachlupOnsager1953b} to determine the most probable path of a diffusion process and can be considered as a probabilistic analogue of the Lagrangian of a dynamical system. Onsager's principle has been used in non-equilibrium statistical mechanics and thermodynamics where connections to the large deviations principle have been observed in physics literature. For example, in \cite{Oono1993} Y.~Oono noted that if one interprets the macroscopic time derivatives as the time average of microscopic time derivatives, then  the rate function for the corresponding  large deviations principle is the Onsager-Machlup Lagrangian, thus leading to a general principle that the rate function of microscopic dynamics governs the phenomenological time evolution of macroscopic quantities. This conclusion follows from using Gaussian approximations to the dynamics in question. For a more mathematical approach to microscopic versus macroscopic, large deviations and the Onsager-Machlup functionals we refer to \cite{Renger2022}.

For example, let $B_{t}$ be a real-valued Brownian motion on a probability space $\left( \Omega, \mathcal{F}, \Prob \right)$. Suppose one can find a functional $\mathcal{L}$ and a constant $C(\varepsilon) >0$ only depending on $\varepsilon$ such that for any $\varphi$ in $\mathcal{C}^2 ([0,1],\R)$
\begin{equation}\label{eqn.goal}
\Prob \left( \sup_{0 \leqslant t \leqslant 1} \lvert B_{t} - \varphi\left( t \right)\rvert  < \varepsilon  \right) \asymp C(\varepsilon)\exp\left( \mathcal{L}(\varphi) \right),  \text{ as }  \varepsilon \longrightarrow 0.
\end{equation}
Then $\mathcal{L}$ is called an \emph{Onsager-Machlup functional} for $B_{t}$ with respect to the sup-norm. In particular,  minimizing $\mathcal{L}$ would yield the most probable deterministic path for $B_{t}$. For finite energy paths $\exp\left( \mathcal{L}(\varphi) \right)$ is the corresponding Girsanov density.

One can consider  \eqref{eqn.goal} for different classes of stochastic processes, or smoothness of the  curve $\varphi$, and finally for tubes around the trajectories defined by different norms. Various norms on the path space and less regular curves have been considered as well, but not in a hypoelliptic setting. The literature on the subject is vast, and here we mention only the ones most relevant to the techniques we use in this paper. We refer to \cite{Zeitouni1989a} for the case of curves which are not necessarily~$C^2$,  and to \cite{LyonsZeitouni1999, SheppZeitouni1993}, where  $L^p$ and convex norms are considered. The case of a diffusion process on a Riemannian manifold with an elliptic infinitesimal generator is the subject of  \cite{Capitaine2000, FujitaKotani1982, HaraTakahashiArxiv2016, HaraTakahashi1996, TakahashiWatanabe1980}. One of the main tools in these results is Girsanov's theorem which is well illustrated by the exposition in \cite{DurrBach1978}. This allows a comparison to a diffusion with constant coefficients which is not applicable in our setting.

Onsager-Machlup functionals for anticipating processes have been studied in \cite{Chaleyat-MaurelNualart1992}, and the fractional Brownian motion is considered in \cite{MoretNualart2002}. We refer to \cite{Mayer-WolfZeitouni1993, DemboZeitouni1991} for the solution to some elliptic stochastic PDE on $\R^{d}$. Onsager-Machlup functionals for jump-diffusion processes are the content of  \cite{ChaoDuan2019}.  As one can see from \cite{GyongyNualartSanz-Sole1995, Chaleyat-MaurelNualart1995, CarmonaNualart1992, BardinaRoviraTindel2003a, BardinaRoviraTindel2003b}, the subject has been active and developed in many directions. Relevant to our setting is their reliance on Girsanov's theorem.

A hypoelliptic diffusion $X_{t}$ has been considered by P.~Pigato in \cite{Pigato2018}, where he provides upper and lower bounds of the probability that the paths of $X_{t}$ are contained in a tube around a given path. However, since the tubes in his approach cannot be arbitrarily small, these estimates are not sufficient to find an Onsager-Machlup functional for $X_{t}$.

The current paper describes Onsager-Machlup functionals and the corresponding asymptotics for a diffusion whose infinitesimal generator is not elliptic, thus making use of Girsanov's theorem either not possible or at least not straightforward. This case has not been considered so far, and we are starting with a particular case of a hypoelliptic diffusion, namely, a hypoelliptic Brownian motion on the Heisenberg group. This group has a natural structure of  a sub-Riemannian manifold. One could try to use recent geometric techniques such as  generalized curvature-dimension inequalities or comparison theorems. We are not relying on Cameron-Martin-type results \cite{BaudoinGordinaMelcher2013, BaudoinFengGordina2019} based on these geometric methods in our work, though such results might be useful in the future. Instead, our techniques exploit pathwise properties of the diffusion we consider.

Let $g_{t}$ be a hypoelliptic Brownian motion on the Heisenberg group $\Hei \cong \mathbb{R}^{3}$, and $\WH$ be the path space of $\Hei$-valued continuous curves starting at the identity of $\Hei$.
Note that while the trajectories of $g_{t}$ are continuous curves in $\mathbb{R}^{3}$, their pathwise behaviour reflect the constrained movement of a sub-Riemannian manifold. In order to find the Onsager-Machlup functional for $g_{t}$ with respect to the sup norm, we introduce a notion of  \emph{continuous horizontal paths} to describe the path space of hypoelliptic Brownian motion on $\Hei$. While the notion of horizontal or subunit paths have been a staple in sub-Riemannian geometry and control theory, it usually requires more regularity than just continuity.

The construction of continuous horizontal paths relies on a Wong-Zakai-like approximation of $g_{t}$. This is closely related to one of the central issues in the rough path theory, namely, that the It\^{o} map for L\'evy's area cannot be made continuous as a function of the driving Brownian motion, e.g. \cite[Section 13.6.2]{FrizVictoirBook2010}, \cite[pp. 115-116]{Inahama2019a}, and \cite{BassHamblyLyons2002}. While the space of continuous horizontal paths is not closed under pointwise multiplication, we can define a horizontal version of any continuous path by a map~$\mathcal{K}: D_{\mathcal{K}} \rightarrow \WH$, where  $D_{\mathcal{K}}$ is a subset of $\WH$. This map can be used to defined a semi-metric on $\WH$, namely,
\[
d_{\mathcal{H}} (\gamma , \varphi ) := \max_{0 \leqslant t \leqslant 1} \vert \mathcal{K} (\varphi ^{-1} \gamma ) (t) \vert.
\]

\begin{theorem*}[Theorem \ref{thm.OM}]
Let $\Hei$ be the Heisenberg group, and $g_{t}$ be the hypoelliptic Brownian motion starting at the identity $e \in \Hei$. There exists a finite constant $C\left( \varepsilon \right) >0$ only depending on $\varepsilon$ such that for any $\varphi  \in D_{\mathcal{K}}$

\begin{equation*}
\lim_{\varepsilon \rightarrow 0} \frac{\Prob \left( d_{\mathcal{H}} (g, \varphi ) <\varepsilon \right)}{\Prob \left( d_{\mathcal{H}} (g, e ) <\varepsilon \right)} = \exp\left( -\frac{1}{2} \Vert \pi_H ( \varphi) \Vert^2_{H_{0} \left( \R^2 \right)} \right).
\end{equation*}
On the other hand, if $\varphi \notin D_{\mathcal{K}}$, then for all $\varepsilon>0$ sufficiently small we have that
\[
\Prob \left( d_{\mathcal{H}} (g, \varphi ) < \varepsilon \right)=0.
\]
\end{theorem*}
The domain $D_{\mathcal{K}}$ of this \textit{horizontal} Onsager-Machlup functional can be described in terms of the domain of the Onsager-Machlup functional associated to a two-dimensional Brownian motion as observed in  Remark \ref{rmk.domain}. Moreover, the constant $C\left( \varepsilon \right)$ is given explicitly in terms of the spectral gap of the infinitesimal generator of $g_{t}$ in a metric ball in  $\Hei$ whose existence was proven in \cite{CarfagniniGordina2022b}.

Let us now describe the construction of the map $\mathcal{K}$ in more detail. The trajectories of a diffusion are only continuous, whereas the most general definition of \emph{horizontal curves} assumes absolute continuity. In Section~\ref{s.ContHorizont} we introduce the notion of continuous horizontal paths and prove that the trajectories of the process $g_{t}$ are indeed continuous horizontal paths. Recall that an absolutely continuous curve $\gamma$ is horizontal if the tangent vector at almost every point is horizontal, i.e. $\gamma^{\prime}(t)$ lives in $\mathbb{R}^{2}$ for a.e.~$t$. Let $H_{0} \left( \R^{2} \right)$ be the Cameron-Martin subspace of $\WR$. We first construct a map $T: H_{0} \left( \R^{2} \right) \rightarrow \WH$ with the following property: an absolutely continuous curve $\gamma$ is horizontal if and only if $\gamma \in  T \left( H_{0} \left( \R^{2} \right) \right)$. We then define continuous horizontal curves by extending the map $T$ from $H_{0} \left( \R^{2} \right)$ to $\WR$.

The space of horizontal paths is not closed with respect to the pointwise multiplication in the path space $\WH$. Equation \eqref{e.horizontalization} defines a horizontal version of a continuous path given by a map $\mathcal{K}: D_{\mathcal{K}} \rightarrow \WH$, and whose domain $D_{\mathcal{K}} \subset \WH$ is a subgroup of $\WH$ with respect to the pointwise multiplication. The map $\mathcal{K}$ has the property that $\mathcal{K}(\gamma)$ is horizontal for any $\gamma\in D_{\mathcal{K}}$. We rely on this property to prove Theorem \ref{thm.OM}.

We expect the techniques introduced in this paper to be applicable to a broader class of hypoelliptic diffusions such as Brownian motions in Carnot groups. These stochastic processes can be written explicitly by means of iterated L\'evy's area functionals, making the analysis even more challenging. Adapting our methods to Carnot groups would require the construction of a delicate approximation to a hypoelliptic Brownian motion and we plan to address this problem in the future.

The paper is organized as follows. In Section~\ref{s.OM} we define the Onsager-Machlup functional,  and in Section~\ref{s.Heisenberg} we describe the Heisenberg group $\Hei$ and the corresponding sub-Laplacian and hypoelliptic Brownian motion $g_{t}$. We then study geometric properties of the path space $\WH$ including properties of the maps $\mathcal{K}$ and $T$, and use them to define continuous horizontal curves. In Section~\ref{s.OMfunctional}  we describe the Onsager-Machlup functional for $g_{t}$ with respect to the sup norm, and in Section~\ref{sec5} we prove our main result.


\section{Onsager-Machlup functional on metric spaces}\label{s.OM}

Suppose $\left( E, d_{E} \right)$ is a complete separable metric space. We consider an $E$-valued continuous stochastic process $\{ X_{t} \}_{0\leqslant t \leqslant 1}$ on a probability space $\left( \Omega, \mathcal{F}, \mathbb{P} \right)$ with  $X_{0} =x_{0} \in E$ a.s. We assume that the stochastic process  $\left\{ X_{t} \right\}_{0\leqslant t \leqslant 1}$ is \emph{adapted} to a filtered probability space $\left( \Omega, \mathcal{F}, \mathcal{F}_{t}, \mathbb{P} \right)$, and we assume that the filtered probability space satisfies the \emph{usual conditions}, that is, it is complete and the filtration is right-continuous. Often we choose the smallest such a filtration, namely, a natural filtration.

Denote by $W_{x_{0}}\left( E \right)$ the Wiener space of $E$-valued continuous  functions on $[0,1]$ starting at $x_{0}$. Note that we can view $X_{t}$ as a  $W_{x_{0}}\left( E \right)$-valued random variable, and we denote its law by $\mu$. Given an element $\varphi \in  W_{x_{0}}\left( E \right)$, we are interested in finding the asymptotics of
\begin{align*}
\Prob \left( \max_{0\leqslant t \leqslant 	1} d_{E} \left( X_{t}, \varphi (t) \right) < \varepsilon \right)
\end{align*}
as $\varepsilon$ goes to zero. More precisely, we want to find a real-valued functional $\mathcal{L}$ on $W_{x_{0}} \left( E \right)$ and a constant $C\left( \varepsilon \right)>0$ depending only on $\varepsilon$ such that
\begin{align}\label{e.OM}
\lim_{\varepsilon \rightarrow 0} \frac{1}{C\left( \varepsilon \right)}  \Prob \left( \max_{0\leqslant t \leqslant 	1} d_{E} \left( X_{t}, \varphi (t) \right) < \varepsilon \right)  = \exp \left( \mathcal{L} (\varphi) \right).
\end{align}

\begin{definition}[Onsager-Machlap functional] \label{d.OM}
Suppose there is  a subset $D_{\mathcal{L}}$ of $W_{x_{0}} \left( E \right)$ and a functional  $\mathcal{L}$  such that the limit \eqref{e.OM} exists for any $\varphi \in D_{\mathcal{L}}$. We call
\[
\mathcal{L}: D_{\mathcal{L}} \longrightarrow \R
\]
\emph{the Onsager-Machlup functional} for $X_{t}$ with respect to the $\sup$-norm
\[
\max_{0\leqslant t \leqslant 1} d_{E} \left( \cdot, \cdot \right)
\]
on $W_{x_{0}} \left( E \right)$. The set $D_{\mathcal{L}}$ is called \emph{the domain of the Onsager-Machlup functional} $\mathcal{L}$.
\end{definition}

\begin{example}[Elliptic diffusion] \label{ex.elliptic}
This example is based on \cite{Capitaine2000, TakahashiWatanabe1980}.
Let $X_ t$ be the diffusion process which is a solution to the stochastic differential equation in $\mathbb{R}^{m}$
\begin{equation*}
dX_{t}=\sigma(t, X_{t}) dB_{t} + b(t, X_{t}) dt, \quad X_{0}=x_{0},
\end{equation*}
where $B_{t}$ is an $\R^d$-valued standard Brownian motion, $\sigma=\sigma\left( t, x \right)$ is an $m\times d$ matrix whose entries are smooth functions in $t$ and $x$,  and $b\in \R ^m$ is a smooth function in $t$ and $x$.  In this example we assume that the generator of $X_{t}$ is elliptic, therefore the matrix $\sigma$ is invertible. Then we can view  $\R^m$ as a Riemannian manifold equipped with the metric $g=(\sigma \sigma^{T})^{-1}$, and  $X_{t}$ as a diffusion process on the manifold $M=(\R^m, g)$ with infinitesimal generator $\frac{1}{2}\Delta_M +Z$, where $Z$ is a smooth vector field and $\Delta_M$ is the Laplace-Beltrami operator on $(M, g)$. More precisely, we have
\begin{align*}
& Z_i(x) := b_i(x) + \frac{1}{2} \sum_{j,k=1}^m(\sigma \sigma^{T})_{kj}(x) \Gamma_{kj}^i,
\\
& \Delta_M f = \sum_{i,j=1} ^m(\sigma \sigma^{T})_{ij}\frac{\partial ^2}{\partial x_i\partial x_j} f - \sum_{i,j,k=1}^m (\sigma \sigma^{T})_{ij} \Gamma_{ij}^k \frac{\partial}{\partial x_k}f,
\end{align*}
where $\Gamma_{ij}^k$ are the Christoffel symbols corresponding to the Levi-Civita connection on the Riemannian manifold $(M, g)$.

Let us denote by $d_{M}$ the Riemannian distance on $M$ induced by $g$, and by $TM$ the tangent bundle over $M$. In \cite{FujitaKotani1982, TakahashiWatanabe1980} it is shown that for any smooth curve $\varphi$ in $M$ with $\varphi(0)=x_{0}$ we have
\begin{align*}
& \lim_{\varepsilon \rightarrow 0} \frac{1}{ C\left(  \varepsilon \right) } \Prob \left(  d_{W_{0} (M)} ( X, \varphi) < \varepsilon \right) = \exp \left( \mathcal{L}(\varphi) \right),
\end{align*}
where
\begin{align*}
& d_{W_{0} (M)} ( X, \varphi): =\max_{0\leqslant t \leqslant T } d_M \left(X_{t}, \varphi \left( t \right) \right), \quad C\left( \varepsilon \right)=  \Prob \left( \max_{0 \leqslant t  \leqslant T} \vert B_{t} \vert < \varepsilon \right),
\\
&\mathcal{L}(\varphi) =   \int_{0}^T L \left( \varphi\left( s \right),  \varphi^\prime \left( s \right)  \right)  ds,
\\
& L(p,v)= -\frac{1}{2}||Z_p - v||_p ^2 -\frac{1}{2}\operatorname{div} Z_p + \frac{1}{12} R(p)
\end{align*}
for any $(p,v)\in TM$. Here $\Vert \cdot \Vert_{p}$ denotes the Riemannian norm on $T_{p}M$  and $R(p)$ is the scalar curvature at $p$.

\end{example}

\section{Heisenberg group basics}\label{s.Heisenberg}

\subsection{Heisenberg group as Lie group}\label{subsec3.1}
The Heisenberg group $\Hei$ as a set is  $\R^3\cong \mathbb{R}^{2} \times \mathbb{R}$ with the group multiplication  given by
\begin{align*}
& \left( \mathbf{v}_{1}, z_{1} \right) \cdot \left( \mathbf{v}_{2}, z_{2} \right) := \left( x_{1}+x_{2}, y_{1}+y_{2}, z_{1}+z_{2} + \frac{1}{2}\omega\left( \mathbf{v}_{1}, \mathbf{v}_{2} \right)\right),
\\
& \mathbf{v}_{1}=\left( x_{1}, y_{1} \right), \mathbf{v}_{2}=\left( x_{2}, y_{2} \right) \in \mathbb{R}^{2},
\\
& \text{where }\omega: \mathbb{R}^{2} \times \mathbb{R}^{2} \longrightarrow \mathbb{R}, \; \omega\left( \mathbf{v}_{1}, \mathbf{v}_{2} \right):= x_{1}y_{2}-x_{2} y_{1}
\end{align*}
is the standard symplectic form on $\mathbb{R}^{2}$. The identity in $\Hei$ is $e=(0, 0, 0)$ and the inverse is given by $\left( \mathbf{v}, z \right)^{-1}= (-\mathbf{v},-z)$.

The Lie algebra of $\Hei$ can be identified with the space  $\R^3\cong \mathbb{R}^{2} \times \mathbb{R}$  with the Lie bracket defined by
\[
\left[ \left( \mathbf{a}_{1}, c_{1} \right), \left( \mathbf{a}_{2}, c_{2} \right)  \right] = \left(0,\omega\left( \mathbf{a}_{1}, \mathbf{a}_{2} \right)  \right).
\]
The set $\R^3\cong \mathbb{R}^{2} \times \mathbb{R}$ with this Lie algebra structure will be denoted by $\mathfrak{h} $.

Let us now recall some basic notation for Lie groups. Suppose $G$ is a Lie group, then the left  and right multiplication by an element $k\in G$ are denoted by
\begin{align*}
L_{k}: G \longrightarrow G, &  & g \longmapsto k^{-1}g,
\\
R_{k}: G \longrightarrow G, &  & g \longmapsto gk.
\end{align*}
Recall that  the tangent space $T_{e}G$ can be identified with the Lie algebra $\mathfrak{g}$ of left-invariant vector fields on $G$, that is, vector fields $X$ on $G$  such that $dL_{k} \circ X=X \circ L_{k}$, where $dL_{k}$ is the differential of $L_k$. More precisely, if $A$ is a vector in $T_{e}G$, then we denote by $\widetilde{A}\in \mathfrak{g}$ the (unique) left-invariant vector field such that $\widetilde{A} (e) = A$.  A left-invariant vector field is determined by its value at the identity, namely, $\widetilde{A}\left( k \right)=dL_{k} \circ\widetilde{A}\left( e \right)$. In addition to actions of $G$ on itself by left and right multiplication, we can also consider the action of $G$ on itself by conjugation, namely,  for each $k\in G$ we define the \emph{inner automorphism} by
\begin{align*}
\operatorname{Inn}_{k}\left( g \right):= k^{-1}gk, g \in G.
\end{align*}
Note that  $\operatorname{Inn}_{k}\left( e \right)=e$, and therefore we have an invertible linear map $\left( d\operatorname{Inn}_{k}\right)_{e}: T_{e}G  \longrightarrow T_{e}G$. Using the identification of the Lie algebra $\mathfrak{g}$ with $T_{e}G$ we can introduce the adjoint representation of $G$.

\begin{definition} The adjoint representation of a Lie group $G$ is the representation of $G$ on the group of Lie algebra automorphisms $ \operatorname{Ad}: G \longrightarrow \operatorname{Aut}\left( \mathfrak{g} \right)$  defined by

\[
\operatorname{Ad}_{k} = \left( d\operatorname{Inn}_{k}\right)_{e}.
\]
\end{definition}
For the Heisenberg group $\operatorname{Ad}$ can be described explicitly as follows.

\begin{proposition}\label{p.Differentials}
Let $k= (k_1, k_2, k_3) = (\mathbf{k}, k_3 )$ and $g= (g_1, g_2, g_3) = (\mathbf{g}, g_3 )$ be two elements in $\Hei$. Then, for every $v= \left( v_1, v_2, v_3 \right) = (\mathbf{v}, v_3 )$ in  $T_g\Hei$, the differentials of the left and right multiplication are given by
\begin{align}\label{LeftRightMultDiff}
& dL_{k}: T_g\Hei \longrightarrow T_{k^{-1}g}\Hei,  \notag
\\
& dR_{k}: T_g\Hei \longrightarrow T_{gk}\Hei, \notag
\\
& dL_{k} (v) =  \left( v_1, v_2, v_3 + \frac{1}{2} \omega( \mathbf{v}, \mathbf{k}) \right), \notag \\
& dR_{k} (v) =  \left( v_1, v_2, v_3 + \frac{1}{2} \omega( \mathbf{v}, \mathbf{k}) \right).
\end{align}
If $h= \left( a, b, c \right) = (\mathbf{a}, c ) \in \mathfrak{h}$, then the adjoint representation is given by
\begin{align}\label{differentials}
& \operatorname{Ad}_{k}:  \mathfrak{h} \longrightarrow \mathfrak{h}, \notag
\\
& \operatorname{Ad}_{k} (h) =  \left(a, b, c +\omega( \mathbf{a}, \mathbf{k} )\right).
\end{align}
\end{proposition}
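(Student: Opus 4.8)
The plan is to derive all three formulas by direct computation from the explicit group law, the only points needing care being the paper's convention $L_k(g) = k^{-1}g$ and the antisymmetry $\omega(\mathbf{u},\mathbf{w}) = -\omega(\mathbf{w},\mathbf{u})$. First I would write $L_k$ in coordinates. Since $k^{-1} = (-\mathbf{k}, -k_3)$, the group law gives
\[
L_k(g) = k^{-1}g = \left(\mathbf{g} - \mathbf{k},\; g_3 - k_3 - \tfrac{1}{2}\omega(\mathbf{k}, \mathbf{g})\right),
\]
which, as a map of $g = (g_1, g_2, g_3) \in \R^3$, is affine: the term $-\tfrac12\omega(\mathbf{k},\mathbf{g}) = -\tfrac12(k_1 g_2 - k_2 g_1)$ is linear in $g$. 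Hence $dL_k$ is the constant linear map whose matrix I read off from these coefficients, and applying it to $v = (\mathbf{v}, v_3) \in T_g\Hei$ gives $dL_k(v) = (v_1, v_2,\, v_3 + \tfrac12(k_2 v_1 - k_1 v_2))$; recognizing $k_2 v_1 - k_1 v_2 = \omega(\mathbf{v}, \mathbf{k})$ yields \eqref{LeftRightMultDiff}. The computation for $R_k(g) = gk = (\mathbf{g} + \mathbf{k},\, g_3 + k_3 + \tfrac12\omega(\mathbf{g}, \mathbf{k}))$ has exactly the same shape and produces the same expression for the differential, which explains why the two maps agree once all tangent spaces are identified with $\R^3$.

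For the adjoint representation I would factor $\operatorname{Inn}_k = R_k \circ L_k$ and apply the chain rule: since $\operatorname{Ad}_k = d(\operatorname{Inn}_k)_e = d(R_k)_{k^{-1}} \circ d(L_k)_e$ and both differentials are given by the single formula above independently of the base point, writing $w = dL_k(h)$ for $h = (\mathbf{a}, c) \in \mathfrak{h}$ gives $\mathbf{w} = \mathbf{a}$ and $w_3 = c + \tfrac12\omega(\mathbf{a}, \mathbf{k})$, and then $dR_k(w) = (\mathbf{a},\, c + \tfrac12\omega(\mathbf{a},\mathbf{k}) + \tfrac12\omega(\mathbf{a},\mathbf{k})) = (\mathbf{a},\, c + \omega(\mathbf{a}, \mathbf{k}))$, which is \eqref{differentials}. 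As an independent check I would also compute $\operatorname{Inn}_k(g) = k^{-1}gk$ directly: the $\mathbf{k}$-terms cancel in the first two coordinates, and the third coordinate collapses via $\omega(\mathbf{g},\mathbf{k}) = -\omega(\mathbf{k},\mathbf{g})$ and $\omega(\mathbf{k},\mathbf{k}) = 0$ to $\operatorname{Inn}_k(g) = (\mathbf{g},\, g_3 + \omega(\mathbf{g},\mathbf{k}))$, a map already linear in $g$ and hence equal to its own differential at $e$.

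There is no real obstacle; the proof is bookkeeping. The two things worth stating explicitly are the canonical identification of every $T_g\Hei$ with $\R^3$ (so that comparing $dL_k$ and $dR_k$ ``by the same formula'' is meaningful even though they have nominally different domains and codomains), and consistent use of the convention $\omega(\mathbf{v}, \mathbf{k}) = v_1 k_2 - v_2 k_1$ together with its antisymmetry when simplifying the third coordinates above.
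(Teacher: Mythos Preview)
Your argument is correct. You exploit the fact that $L_k$, $R_k$, and $\operatorname{Inn}_k$ are affine maps of $\R^3$ (with $k$ fixed), so their differentials are simply the linear parts read off from the group law; the chain rule then gives $\operatorname{Ad}_k$, and your direct check of $\operatorname{Inn}_k$ confirms it.

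The paper's proof is organized differently: it picks an integral curve $\gamma_g$ for the left-invariant field $\tilde{h}$ (as set up in the preceding lemmas), composes with $\operatorname{Inn}_k$, and differentiates at $t=0$, leaving $dL_k$ and $dR_k$ to the reader as ``similar.'' Your route is more elementary and self-contained, since it avoids any appeal to integral curves or one-parameter subgroups and just differentiates polynomial maps in coordinates. The paper's route fits the Lie-theoretic narrative already in place (Lemmas on left-invariant fields and integral curves), but in the Heisenberg case the affine structure makes your direct computation the cleaner option. Either way the substance is the same short calculation with $\omega$; your explicit remark about the canonical identification of all $T_g\Hei$ with $\R^3$ is a point the paper leaves implicit.
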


\begin{proof}
Suppose  $\gamma_{g}$ is an integral curve for the vector field $\tilde{h}$ starting at $g$. Then $\gamma_{g}^{\prime} (0)=w$ and therefore
\begin{align*}
& \operatorname{Ad}_{k} \left( h \right) = \left.\frac{d}{dt}\right|_{t=0}(\operatorname{Inn}_{k}  \gamma_{g} \left( t \right))
\\
& = \left( \gamma_1^{\prime}(0),  \gamma_2^{\prime}(0), \gamma_3^{\prime}(0) + \frac{1}{2}  \omega\left( \left(  \gamma_1^{\prime}(0),  \gamma_2^{\prime}(0) \right), \mathbf{k}\right)  \right)
 \\
&= \left( a, b, c + \omega (\mathbf{a}, \mathbf{k} )   \right).
\end{align*}
The rest of the statement can be shown similarly.
\end{proof}

\subsection{Heisenberg group as a sub-Riemannian manifold}
The Heisenberg group $\Hei$ is the simplest non-trivial example of a sub-Riemannian manifold.
We define $X$, $Y$, and $Z$ as the unique left-invariant vector fields satisfying $X_e = \partial_x$, $Y_e = \partial_y$ and $Z_e = \partial_z$, that is,
\begin{align*}
& X = \partial_x - \frac{1}{2}y\partial_z,
\\
& Y = \partial_y + \frac{1}{2}x\partial_z,
\\
&  Z = \partial_z.
\end{align*}
Note that the only non-zero Lie bracket for these left-invariant vector fields is $[X, Y]=Z$, so the vector fields $\left\{ X, Y \right\}$ satisfy H\"{o}rmander's condition. We define the \emph{horizontal distribution} as $\mathcal{H}:= \Span \left\{  X, Y \right\}$ fiberwise, thus making $\mathcal{H}$ a sub-bundle in the tangent bundle $T\Hei$. To finish the description of the Heisenberg group as a sub-Riemannian manifold we need to equip the horizontal distribution  $\mathcal{H}$ with an inner product. For any $p \in \Hei$ we define the inner product $\langle \cdot , \cdot \rangle_{\mathcal{H}_{p}}$ on $\mathcal{H}_{p}$ so that $\left\{ X \left( p \right), Y \left( p \right) \right\}$ is an orthonormal (horizontal) frame at any $p \in \Hei$. Vectors in $\mathcal{H}_{p}$ will be called \emph{horizontal}, and the corresponding norm will be denoted by $\Vert \cdot \Vert_{\mathcal{H}_{p}}$.

In addition, H\"{o}rmander's condition ensures that a natural sub-Laplacian on the Heisenberg group
\begin{equation}\label{e.2.1}
\Delta_{\mathcal{H}} =  X ^2 + Y ^2
\end{equation}
is a hypoelliptic operator by \cite{Hormander1967a}.

We recall now two other important objects in sub-Riemannian geometry, namely, horizontal curves and horizontal gradient. We start by introducing Maurer-Cartan forms on a Lie group $G$.

\begin{notation}\label{n.MaurerCartan}
By $\theta^{l}$ ($\theta^{r}$) we will denote the left (right) Maurer–Cartan form on $G$, i.e. $\theta$ is the
$\mathfrak{g}$-valued $1$-form on $\mathfrak{h}$ defined by
\begin{align*}
& \theta_{k}^{l}\left( v \right):=dL_{k} \left( v \right)
\\
& \theta_{k}^{r}\left( v \right):=dR_{k^{-1}} \left( v \right).
\end{align*}
for any $g \in G, v \in T_{g} G$.
\end{notation}
Note that in general

\begin{equation}\label{MaurerCartanLR}
\theta_{k}^{r}\left( v \right)=\operatorname{Ad}_{k^{-1}}\left(\theta_{k}^{l}\left( v \right) \right).
\end{equation}
For the Heisenberg group Maurer-Cartan forms can be written explicitly by using \eqref{LeftRightMultDiff}  as follows

\begin{align}\label{e.MaurerCartanHeis}
& \theta_{k}^{l}\left( v \right)= \left( v_1, v_2, v_3 + \frac{1}{2} \omega( \mathbf{v}, \mathbf{k}) \right),
\\
& \theta_{k}^{r}\left( v \right)= \left( v_1, v_2, v_3 - \frac{1}{2} \omega( \mathbf{v}, \mathbf{k}) \right). \notag
\end{align}

\begin{definition}\label{d.horizontal}
An absolutely continuous path $t \longmapsto \gamma \left( t \right) \in \Hei,  \; 0 \leq t \leq 1$ is said to be \emph{horizontal} if $\gamma^{\prime}\left( t \right)\in\mathcal{H}_{\gamma\left( t \right)}$ for all $t$, that is, the tangent vector to $\gamma\left(t\right)$ at every point $\gamma \left(t\right)$ is horizontal. Equivalently we can say that $\gamma$ is horizontal if the (left) Maurer-Cartan form $c_{\gamma}\left( t \right):=\theta_{\gamma\left( t \right)}^{l}\left( \gamma^{\prime}\left( t \right)\right) \in \mathcal{H}_{e}$ for a.e. $t$.
\end{definition}
Note that for $\gamma\left( t \right) = \left( \mathbf{x}\left( t \right), z\left( t \right) \right)$ we have
\begin{align}\label{e.MaurerCartan}
& c_{\gamma}\left( t \right)=\theta_{\gamma\left( t \right)}^{l}\left( \gamma^{\prime}\left( t \right)\right)=\left( \mathbf{x}^\prime\left( t\right), z^{\prime}\left( t \right) -\frac{1}{2}\omega( \mathbf{x}\left( t \right), \mathbf{x}^{\prime}\left( t \right) )\right), \notag
\end{align}
where we used Proposition~\ref{p.Differentials}. It is then easy to see that a curve $\gamma$ is horizontal if and only if
\begin{equation}\label{e.horizontal}
z^{\prime}\left( t \right) -\frac{1}{2}\omega( \mathbf{x}\left( t \right), \mathbf{x}^{\prime}\left( t \right) ))=0.
\end{equation}
\begin{definition}\label{d.finiteenergypath}
We say that an absolutely continuous horizontal curve $t \longmapsto \gamma \left( t \right) \in \Hei$, for a.e.  $0\leq t \leq 1$ has \emph{finite energy} if
\begin{equation}\label{e.finite.energy}
\Vert \gamma \Vert_{H\left( \Hei \right)}^{2}
:= \int_{0}^1 \vert c_\gamma \left( s \right) \vert^2_{\mathcal{H}_e}  ds=\int_{0}^1 \vert \theta_{\gamma(s)} \left( \gamma^\prime(s) \right)\vert^2_{\mathcal{H}_{e}} ds < \infty.
\end{equation}
We denote  by $\mathsf{Hor}\left( \Hei\right)$ the \emph{Cameron-Martin space of finite energy horizontal paths} starting at the identity
\begin{align*}
\mathsf{Hor}\left( \Hei\right):= & \left\{ \gamma : [0,1] \longrightarrow \Hei,   \Vert \gamma \Vert_{H\left( \Hei \right)} < \infty,  \gamma\left( 0 \right)=e,
\right.
\\
& \left.
\gamma  \text{ is absolutely continuous and horizontal} \right\}.
\end{align*}
The inner product corresponding to the norm $\Vert \cdot \Vert_{H\left( \Hei \right)}$ is denoted by $\langle \cdot, \cdot \rangle_{H\left( \Hei \right)}$.
\end{definition}

Note that the Heisenberg group as a sub-Riemannian manifold comes with a natural left-invariant distance which we will use to define the Onsager-Machlup functional.

\begin{definition}\label{Dfn.2.2}
For any $g_{1}, g_{2} \in \Hei$ the Carnot-Carath\'{e}odory distance is defined as
\begin{align*}
d_{cc} (g_{1}, g_{2}):= &\inf \left\{  \int_{0}^1  \vert c_\gamma \left( s \right) \vert_{\mathcal{H}_e}  , \right.
\\
& \left. \gamma : [0,1] \longrightarrow \Hei, \gamma(0)=g_{1}, \gamma(1)=g_{2},  \gamma  \text{ is horizontal}  \right\}.
\end{align*}
\end{definition}
Another consequence of  H\"{o}rmander's condition for left-invariant vector fields $X$, $Y$ and $Z$ is that we can apply the Chow–Rashevskii theorem. As a result, given two points in $\Hei$ there exists a horizontal curve connecting them, and therefore the Carnot-Carath\'{e}odory distance is finite on $\Hei$. The Carnot-Carath\'{e}odory distance defined in Definition~\ref{Dfn.2.2} is an example of a control distance related to the left-invariant  vector fields $X$, $Y$ and $Z$. We refer for more details to \cite[Definition 5.2.2]{BonfiglioliLanconelliUguzzoniBook}.

In addition to the Carnot-Carath\'{e}odory distance on the Heisenberg group,  we will use  the following homogeneous distance
\begin{equation}\label{hom.norm}
\rho( g_{1}, g_{2}):= \left(  \Vert \mathbf{x}_{1}-  \mathbf{x}_{2}\Vert^{4}_{\mathbb{R}^{2}}+ \vert z_{1}-z_{2} + \omega(\mathbf{x}_{1}, \mathbf{x}_{2}) \vert^2 \right)^{\frac{1}{4}},
\end{equation}
which is equivalent to the Carnot-Carath\'{e}odory distance, that is, there exist  two positive constants $c$ and $C$ such that
\begin{equation}\label{e.DistEquivalence}
c \rho( g_{1}, g_{2}) \leqslant d_{cc}( g_{1}, g_{2} ) \leqslant C \rho( g_{1}, g_{2})
\end{equation}
for all $g_{1}, g_{2} \in \Hei$. We denote by $\vert \cdot \vert$ the norm on $\Hei$ induced by $\rho$, that is, $\vert g \vert = \rho ( g, e )$ for all $g\in \Hei$. In particular, by the left-invariance of $\rho$ we have that for any $g_{1}, g_{2} \in \Hei$
\begin{equation}\label{e.triangular.ineq}
\vert g_{2}^{-1} g_{1} \vert = \rho\left( g_{2}^{-1} g_{1}, e \right) = \rho \left( g_{1}, g_{2} \right) \leqslant \rho \left( g_{1},e \right) +  \rho \left( g_{2},e \right) = \vert g_{1} \vert +\vert g_{2} \vert.
\end{equation}
This is discussed in a more general setting in \cite[Proposition 5.1.4]{BonfiglioliLanconelliUguzzoniBook}.

\begin{definition}\label{d.HeisenbergBM}
An $\Hei$-valued Markov process $g_{t}$ is called a hypoelliptic Brownian motion if its generator is the sub-Laplacian $\frac{1}{2} \Delta_{\mathcal{H}}$ defined by \eqref{e.2.1}.
\end{definition}
The process $g_{t}$ satisfies a stochastic differential equation as follows. Namely, we can use the Maurer-Cartan form or the differential of left multiplication operator to find a Lie group-valued Brownian motion
\begin{align*}
& \theta_{g_{t}}^{l}\left( dg_{t} \right)=dL_{g_{t}}\left( dg_{t} \right)=\left( dB_{t}, 0 \right),
\\
& g_{0}=e,
\end{align*}
where $B_{t} = \left( B_{1} (t), B_{2} (t) \right)$ is a standard two-dimensional Brownian motion. An explicit solution is given by
\begin{equation}\label{e.HypoBM}
g_{t}:=\left( B_{t}, A_{t} \right),
\end{equation}
where $A_{t}:= \frac{1}{2} \int_{0}^t \omega\left( B_{s}, dB_{s}\right)$ is L\'{e}vy's stochastic area. Note that we used an It\^{o} integral in the definition of $g_{t}$ rather than the Stratonovich integral. However, these two integrals are equal since the symplectic form $\omega$ is skew-symmetric, and therefore  L\'{e}vy's stochastic area functional is the same for both integrals.

\section{Continuous horizontal paths}\label{s.ContHorizont}

The goal of this section is to give a natural notion of continuous horizontal curves which might not be absolutely continuous. First, we introduce the notation that will be used throughout the rest of the paper.

\begin{notation}[Standard Wiener space] We denote by
\[
W_{0}\left( \mathbb{R}^{n} \right)=W_{0}\left( [0, 1], \mathbb{R}^{n} \right)
\]
the space of $\mathbb{R}^{n}$-valued continuous functions starting at $0$. This space is equipped  with the norm

\[
\Vert h \Vert_{W_{0} \left(\R^n\right)}:=\max_{0\leqslant t \leqslant 1} \vert h \left( t \right) \vert_{\R^n}, \quad h\in W_{0} \left( \R^n \right),
\]
and the corresponding distance
\[
d_{W_{0} \left(\R^n\right)} (h,k) =\max_{0 \leqslant t\leqslant 1} \vert h \left( t \right)-k\left( t \right) \vert_{\R^n},
\]
where $\vert \cdot \vert_{\R^n} $ is the Euclidean norm. Moreover, we denote by $H_{0} \left( \R^n \right)$ the Cameron-Martin subspace of  absolutely continuous curves such that
\[
\Vert \gamma \Vert_{H_{0} \left( \R^n \right)}:=\int_{0}^1 \vert \gamma^\prime \left( t \right) \vert^2_{\R^n}ds < \infty.
\]
\end{notation}
\begin{notation}[Wiener space over $\Hei$]\label{d.wiener.space}
We denote by $\WH$ the Wiener space over $\Hei$, i.e. the space of $\Hei$-valued continuous functions starting at the identity in $\mathbb{H}$.
\end{notation}
Once a norm on $\Hei$ is fixed, one can introduce topology on $W_{0} \left( \Hei \right)$ in the following way.
\[
\Vert \eta \Vert_{\WH}:=\max_{0\leqslant t \leqslant 1} \vert \eta \left( t \right) \vert, \quad \eta \in W_{0} \left( \Hei \right),
\]
and the corresponding distance is
\[
d_{\WH}(\eta, \gamma) = \Vert \eta^{-1} \gamma \Vert = \max_{ 0 \leqslant t \leqslant 1 } \vert \eta\left( t \right)^{-1} \gamma \left( t \right) \vert
\]
for any $\eta, \gamma \in W_{0} \left( \Hei \right)$.

\subsection{Absolutely continuous horizontal projections}

We now describe the map that we can informally view as a horizontal projection on the Wiener space $W_{0} \left( \Hei \right)$.

\begin{notation}\label{n.hypo.wiener.measure}
Let $W_{0} \left( \Hei \right)$ be the Wiener space over $\Hei$, and $g_{t}$ be the hypoelliptic Brownian motion defined by Equation \eqref{e.HypoBM}. We denote its law by $\mu$.
\end{notation}

Let us consider the map
\begin{align}
& T: H_{0}\left( \R^2\right) \longrightarrow \WH  \label{dfn.T}
\\
& T(\xi) \left( t \right) := \left(\xi\left( t \right), \frac{1}{2} \int_{0}^t \omega\left( \xi\left( s \right),  \xi^\prime \left( s \right) \right)ds \right), \; \xi \in H_{0}\left( \R^2\right), \notag
\end{align}
and the projections
\begin{align}
& \pi_H : \WH \longrightarrow \WR, \quad \pi_H(\gamma) \left( t \right):= \left( \gamma_1 \left( t \right), \gamma_2 \left( t \right)  \right), \label{dfn.pi}
\\
& \pi_V : \WH \longrightarrow W_{0} \left( \R \right), \quad \pi_V (\gamma) \left( t \right):=  \gamma_3 \left( t \right)  \label{dfn.pi.vertical}
\end{align}
for any $\gamma = \left( \gamma_1, \gamma_2, \gamma_3 \right) \in \WH$. Then the map $ \mathcal{K}:= T\circ \pi_H$ can be written explicitly as
\begin{align}\label{dfn.H}
\mathcal{K}(\gamma ) \left( t \right):= \left(\gamma_1\left( t \right), \gamma_2\left( t \right), \frac{1}{2} \int_{0}^t \omega\left( \gamma (s), \gamma^\prime (s) \right)ds \right),
\end{align}
for any  $\gamma = \left( \gamma_1, \gamma_2, \gamma_3 \right) \in \pi_H^{-1 } \left( H_{0}\left( \R^2\right) \right)$.

\begin{proposition}\label{p.propert.general} Let $\WH$, $\mathsf{Hor} \left( \Hei \right)$ be defined as in Definition~\ref{d.finiteenergypath} and Definition~\ref{n.hypo.wiener.measure}, then

(1)  $\left( \WH, \Vert \cdot \Vert_{\WH} \right)$ is an infinite-dimensional topological group with respect to the pointwise multiplication.

(2) $\mathsf{Hor} \left( \Hei \right)$ is not closed under the group operations in $\left( \WH, \Vert \cdot \Vert_{\WH} \right)$.

(3) $\left( \WH, \WR, \pi_H  \right)$ is a fiber bundle with fibers homeomorphic to $W_{0} \left( \R \right)$.
\end{proposition}

\begin{proof}
(1) Let $\gamma_{1} , \gamma_{2} \in \WH$, then it is clear that $\gamma_{1} \cdot \gamma_{2}^{-1} \in \WH$, so $\WH$ is a group. Therefore, we only need to prove that  the map
\begin{align*}
& G : \left( \WH, \Vert \cdot \Vert_{\WH} \right) \times \left( \WH, \Vert \cdot \Vert_{\WH} \right) \longrightarrow \left( \WH, \Vert \cdot \Vert_{\WH} \right)
\\
& \left( \gamma_1, \gamma_2 \right) \longrightarrow \gamma_1^{-1}  \cdot \gamma_2
\end{align*}
is continuous, that is, for any open neighborhood $U$ of $\gamma_1^{-1}  \cdot \gamma_2$ in the topology of $\WH$, there exists a neighborhood $W$ of $\left( \gamma_1, \gamma_2 \right)$ open in the topology of $\WH \times \WH$ such that $G(W) \subset U$. The uniform topology on $\WH$ is generated by balls, therefore it is enough to prove that for any $r>0$ there exist $r_1, r_2>0$ such that
\begin{equation}\label{eq.continuity}
G \left( B_{r_1} (\gamma_1) \times  B_{r_2} (\gamma_2) \right) \subset B_r  \left( \gamma_1^{-1} \cdot \gamma_2 \right),
\end{equation}
where $B_{r} (\gamma) = \{ \eta \in \WH \; \Vert \eta^{-1} \gamma \Vert_{\WH} < r \}$. For any $r>0$, let $r_{1} = r_{2} := \frac{r}{2}$. Let us prove that for any $h_{i} \in B_{r_i} ( \gamma_{i}) $ for $i=1,2$, then  $G( h_{1}, h_{2}) \in  B_{r}  \left( \gamma_{1}^{-1} \cdot \gamma_{2} \right)$. Indeed, by \eqref{e.triangular.ineq} it follows that
\begin{align*}
& \Vert G(h_{1}, h_{2})^{-1} \gamma_{1}^{-1}\gamma_{2} \Vert_{\WH} = \Vert h_{2}^{-1} h_{1} \gamma_{1}^{-1} \gamma_{2} \Vert_{\WH}
\\
& =  \Vert  h_{1} \gamma_{1}^{-1} \gamma_{2} h_{2}^{-1} \Vert_{\WH} \leqslant \Vert h_{1}^{-1} \gamma_{1} \Vert_{\WH} + \Vert h_{2}^{-1} \gamma_{2} \Vert_{\WH} < \frac{r}{2} +\frac{r}{2} =r,
\end{align*}
which proves \eqref{eq.continuity}.

(2) For $h = \left( \bm{h}, h_3 \right)$ and   $k= \left( \bm{k}, k_3 \right) \in \mathsf{Hor } \left( \Hei \right)$, let $c_h \left( t \right)$ and $c_k \left( t \right)$ be the corresponding Maurer-Cartan forms, that is,
\begin{align*}
& c_h \left( t \right) = \left( \mathbf{h}^\prime\left( t\right), h_3^{\prime}\left( t \right) -\frac{1}{2}\omega( \mathbf{h}\left( t \right), \mathbf{h}^{\prime}\left( t \right) )\right) = \left( \mathbf{h}^\prime \left( t \right), 0 \right),
\\
& c_k \left( t \right) = \left( \mathbf{k}^\prime\left( t\right), k_3^{\prime}\left( t \right) -\frac{1}{2}\omega( \mathbf{k}\left( t \right), \mathbf{k}^{\prime}\left( t \right) )\right) = \left( \mathbf{k}^\prime \left( t \right), 0 \right).
\end{align*}
Then
\begin{align*}
h^{-1}\cdot k \left( t \right) := \left( \mathbf{k}  \left( t \right)-  \mathbf{h} \left( t \right), k_3\left( t \right) - h_3\left( t \right)  + \frac{1}{2} \omega \left( \mathbf{k} \left( t \right), \mathbf{h} \left( t \right) \right) \right),
\end{align*}
and hence
\begin{align*}
& c_{ h^{-1} \cdot k } \left( t \right) = \left( \mathbf{k}^\prime \left( t \right) - \mathbf{h}^\prime  \left( t \right), \right.
\\
& \qquad \qquad\quad  \left. k^\prime_3\left( t \right) - h^\prime_3\left( t \right) + \frac{1}{2} \omega \left( \mathbf{k} \left( t \right), \mathbf{h} \left( t \right) \right)^\prime - \frac{1}{2} \omega \left(\mathbf{k} \left( t \right) - \mathbf{h}  \left( t \right), \mathbf{k}^\prime \left( t \right) - \mathbf{h}^\prime  \left( t \right)  \right) \right)
\\
& \qquad \qquad = \left( \mathbf{k}^\prime \left( t \right) - \mathbf{h}^\prime  \left( t \right) , \omega \left( \mathbf{k} \left( t \right) - \mathbf{h}  \left( t \right), \mathbf{h}^\prime \left( t \right) \right) \right),
\end{align*}
that is, $h^{-1} \cdot k$ is horizontal if and only if $\omega \left( \mathbf{k} \left( t \right) - \mathbf{h}  \left( t \right), \mathbf{h}^\prime \left( t \right) \right) =0$ for all $0\leqslant t \leqslant 1$.

(3) We  need to prove that
\begin{align*}
\pi_H \, : \left(  \WH, \Vert \cdot \Vert_{\WH} \right) \longrightarrow \left(  \WR, \Vert \cdot \Vert_{\WR} \right)
\end{align*}
is a continuous surjective map, and that for any $\psi \in \WR$, the fiber $\left(   \pi_H^{-1} (\psi ) , \Vert \cdot \Vert_{\WH} \right)$ is homeomorphic to $\left( W_{0} \left( \R \right) , \Vert \cdot \Vert_{W_{0} \left( \R \right)} \right)$. First, by Definition \eqref{dfn.pi} it is clear that $\pi_{H}$ is surjective, and since $\Vert \pi_H (\varphi) \Vert_{\WR}\leqslant \Vert \varphi \Vert_{\WH}$ for any $\varphi \in \WH$, we have that $\pi_H$ is continuous in the uniform topology.

Let us now prove that for any  $\psi \in \WR$, the spaces $\pi_H^{-1} (\psi )$ and $W_{0} \left( \R \right)$ are homeomorphic when both endowed with the uniform topology. First, note that
\begin{align*}
& \pi_H^{-1} (\psi ) := \{ \varphi \in \WH : \, \pi_H (\varphi) = \psi  \}
\\
& = \{  \left(\psi, \varphi_3  \right) \in \WH \; \text{ for some } \;  \varphi_3 \in W_{0}\left( \R \right)  \}.
\end{align*}
For any $\gamma \in  \pi_H^{-1} (\psi )$, let us consider the vertical projection restricted to $\pi_H^{-1} (\psi)$, that is,
\begin{align} \label{eqn.homeo}
& \pi_V :  \pi_H^{-1} (\psi) \longrightarrow  W_{0} \left( \R \right),
\\
& \qquad  \gamma \longmapsto \pi_V(\gamma). \notag
\end{align}
We claim that $\pi_V$ is a homeomorphism between  $\left(  \pi_H^{-1} (\psi ) , \Vert \cdot \Vert_{\WH} \right)$  and $\left( W_{0} \left( \R \right) , \Vert \cdot \Vert_{W_{0} \left( \R \right)} \right)$. Note that $\pi_V$ is a bijection. Indeed, if $\gamma := \left( \psi, \gamma_3 \right)$ and  $\eta := \left( \psi, \eta_3 \right)  \in \pi_H^{-1} (\psi)$ satisfy $\pi_V(\gamma) = \pi_V(\eta) $, then $\gamma_3 = \eta_3$  and hence $\gamma = \eta$, proving that $\pi_V$ restricted to $\pi_H^{-1} (\psi)$ is injective. On the other hand, for any  $f \in W_{0} \left( \R \right)$ set $\psi_f := \left( \psi, f\right) \in \WH$. Then $\pi_V( \psi_f) =f$, proving that  $\pi_V$ is surjective.  Continuity of $\pi_V$ can be proved similarly to the continuity of $\pi_H$. The proof is then completed once we show that  the map
\begin{align}
\pi_V^{-1}:& W_{0} \left( \R \right)  \longrightarrow  \pi_H^{-1} (\psi)
\\
&  f \longmapsto \psi_{f} \notag
\end{align}
is continuous, that is, we need to show that for any $r>0$ there exists an $r_1>0$ such that
\begin{align}\label{eqn.contin.inverse}
& \pi_V^{-1} \left( B_{r_1}^{W_{0}\left( \R \right)} (f)  \right) \subset B_r^{\WH} \left( \pi_V^{-1}(f) \right)  \cap \pi^{-1}_H (\psi).
\end{align}
For any $r>0$, set $r_{1}:= r^{2}$. Suppose $g \in  B_{r_1}^{W_{0}\left( \R \right)} (f)$, then
\[
 \left( \pi_{V}^{-1}  (g) \left( t \right) \right)^{-1}  \pi_{V}^{-1}(f) \left( t \right)  = \left( 0,0, f\left( t \right) - g\left( t \right) \right),
\]
and hence it follows that
\begin{align*}
& \Vert\left( \pi_V^{-1}  (g)  \right)^{-1}  \pi_V^{-1}(f)  \Vert_{\WH} := \max_{0 \leqslant t \leqslant 1} \vert \left( \pi_V^{-1}  (g) \left( t \right) \right)^{-1}  \pi_V^{-1}(f) \left( t \right)     \vert
\\
& = \max_{0 \leqslant t \leqslant 1} \vert f\left( t \right) - g\left( t \right) \vert^{\frac{1}{2}} = \Vert f-g  \Vert_{W_{0} \left( \R \right) }^{\frac{1}{2}} < r_1 ^{\frac{1}{2}} = r,
\end{align*}
and \eqref{eqn.contin.inverse} is proven.

\end{proof}

\begin{proposition}[Properties of $T$]\label{p.propert.T}
Let $T$ be given by \eqref{dfn.T}, then

(1) $T$ is not surjective, that is,  $T\left(H_{0} (\R^{2}) \right) \varsubsetneqq \WH$.

(2) The map
\begin{align*}
& T : \left(  H_{0}\left( \R^2 \right) , \Vert \cdot \Vert_{ H_{0}\left( \R^2 \right)}  \right) \longrightarrow \left( \mathsf{Hor} \left( \Hei \right) , \Vert \cdot \Vert_{\mathsf{Hor} \left( \Hei \right)} \right)
\end{align*}
is an isometry with the inverse given by
\[
\left.\pi_{H}\right|_{\mathsf{Hor} \left( \Hei \right) }:  \left( \mathsf{Hor} \left( \Hei \right) , \Vert \cdot \Vert_{\mathsf{Hor} \left( \Hei \right)} \right) \longrightarrow    \left(  H_{0}\left( \R^2  \right) , \Vert \cdot \Vert_{ H_{0}\left( \R^2 \right)}  \right).
\]
\end{proposition}

\begin{proof}
(1)  Recall that $T\left( H_{0}\left( \R^2\right) \right)= \mathsf{Hor} \left( \Hei \right)$  by \eqref{dfn.T} and by Definition~\ref{d.finiteenergypath}. Observe that for $\gamma(t):= (t, t^{2}, 0)$ we have $\gamma \in \WH$ but not in $\mathsf{Hor} \left( \Hei \right)$.

(2)  It is clear that $T$ and $\left.\pi\right|_{\mathsf{Hor} \left( \Hei \right)}$ are inverses of each other. Indeed, if $\gamma \in  \mathsf{Hor} \left( \Hei \right)$, then by \eqref{dfn.T} and \eqref{dfn.pi} we have that $\pi_{H} (\gamma) \in H_{0} \left( \R^2 \right)$ and $T(\pi_{H} (\gamma)) = \gamma$, proving that $T$ is onto  $\mathsf{Hor} \left(  \Hei \right)$. Let $\psi_{1}, \psi_{2}\in  H_{0} \left( \R^2 \right)$ such that $T(\psi_{1}) = T(\psi_{2})$, then by \eqref{dfn.T} we have that $\psi_{1}= \psi_{2}$ proving that $T$ is injective. Moreover, by Definition~\ref{d.finiteenergypath} we see that
\begin{align*}
& \Vert  T( \psi  ) \Vert_{\mathsf{Hor} \left(  \Hei \right)}  = \Vert  \psi \Vert_{H_{0} \left( \R^2 \right)} \; \text{for any} \;   \psi \in H_{0} \left( \R^2 \right),
\\
&  \Vert  \pi( \varphi ) \Vert_{H_{0} \left( \R^2 \right)   }  = \Vert \varphi \Vert_{\mathsf{Hor} \left(  \Hei \right)} \; \text{for any} \;   \varphi \in \mathsf{Hor} \left(  \Hei \right),
\end{align*}
which concludes the proof.
\end{proof}

Below we list properties of the map $\mathcal{K}$ defined by \eqref{dfn.H}, the proof easily follows from the definition of $\pi_H, T$, and $\mathcal{K}$.

\begin{proposition}[Properties of $\mathcal{K}$]\label{p.propert.H} Let $\mathcal{K}$ be the map given by \eqref{dfn.H}. Then

(1)  $\mathcal{K}(\gamma)$ is defined for any $\gamma \in D_{\mathcal{K}}:=  \pi_H^{-1} \left( H_{0}\left( \R^2\right) \right)$  and $\mathcal{K}: D_{\mathcal{K}} \longrightarrow \WH$ is not surjective, that is, $\mathcal{K} \left( D_{\mathcal{K}}   \right)  = \mathsf{Hor} \left( \Hei \right) \varsubsetneqq \WH$, and  $D_{\mathcal{K}}$ is a subgroup of $\WH$.

(2) We have that $\left.\mathcal{K}\right|_{\mathsf{Hor} \left( \Hei \right)} = id_{\mathsf{Hor} \left( \Hei \right)}$ and $\pi_H \circ T= id_{H_{0}\left( \R^2\right) }$. Hence,
\begin{align*}
& \left.\mathcal{K}\right|_{\mathsf{Hor} \left( \Hei \right)}: \left( \mathsf{Hor} \left( \Hei \right)  , \Vert \cdot \Vert_{ \mathsf{Hor} \left( \Hei \right)}  \right) \longrightarrow \left( \mathsf{Hor} \left( \Hei \right), \Vert \cdot \Vert_{\mathsf{Hor} \left( \Hei \right)} \right)
\end{align*}
is an isometry, and therefore continuous.

(3)  If  $\gamma_1, \gamma_2 \in D_{\mathcal{K}}$, then
\begin{align*}
& \mathcal{K} \left( \mathcal{K} (\gamma_1)  \cdot \mathcal{K}(\gamma_2) \right) = \mathcal{K}\left( \gamma_1 \cdot \gamma_2 \right).
\end{align*}
\end{proposition}

\subsection{Admissible approximations}

Let $B_{t}$ be an $\R^{2}$-valued standard Brownian motion and
\[
A_{t}:= \frac{1}{2} \int_{0}^{t} \omega\left( B_{s}, dB_{s} \right)
\]
the corresponding L\'evy's area. Our goal is to extend $T$ to a measure-preserving isomorphism defined on the space of sample paths of $B_{t}$.  Let $\{ B_{\delta} \}_{\delta >0}$ be an approximation of $B_{t}$, that is,
\begin{equation}\label{eqn.convergence.Bm}
\left[ \max_{0\leqslant t \leqslant 1} \vert B_{\delta} (t) - B_{t} \vert^{2}_{\R^{2}} \right] \rightarrow 0 \;
\text{as} \; \delta \rightarrow 0,
\end{equation}
such that
\begin{equation}\label{eqn.convergence.Levyarea}
\left[ \max_{0\leqslant t \leqslant 1} \vert A_{\delta} (t) - A_{t} \vert^{2}_{\R} \right] \rightarrow 0 \;
\text{as} \; \delta \rightarrow 0,
\end{equation}
where
\begin{equation}\label{approximation.LevyArea}
A_\delta\left( t \right) := \frac{1}{2} \int_{0}^t \omega \left( B_{\delta} (s), B^{\prime}_{\delta}(s) \right)ds.
\end{equation}
An example of such approximation is given by
\begin{align*}
B_{i,\delta} \left( t \right):= B_i (k \delta ) + f_i \left( \frac{t-k\delta}{\delta} \right)  \left( B_i (k\delta +\delta ) - B_i(k\delta)   \right),   k \delta \leqslant t <  (k+1)\delta ,
\end{align*}
where $f_i$ $i=1, 2$ are  differentiable functions on $[0,1]$ such that $f_i(0)=0$ and $f_i(1)=1$,  \cite[Theorem 7.1]{IkedaWatanabe1989}.

\begin{definition} We call an approximation to $B_{t}$ satisfying \eqref{eqn.convergence.Levyarea} an \emph{admissible approximation}.
\end{definition}

\begin{remark}\label{r.levyarea.approx}
As pointed out in \cite[Theorem 7.1,  p. 486]{IkedaWatanabe1989} and \cite{Sugita1992}, there is no canonical way to approximate L\'{e}vy's area $A_{t}$. Indeed, one can construct a smooth approximation $\{ \tilde{B}_\delta\}_{\delta >0}$ to $B_{t}$ that does not satisfy \eqref{eqn.convergence.Levyarea}.  This is closely related to the fact that the map $B_{t} \longmapsto A_{t}$ is not continuous with respect to any Banach norm preserving the Gaussian structure of the Wiener space as pointed out in \cite{Sugita1989}. It is not surprising that this is one of the central issues in the theory of rough paths as pointed out in \cite[Section 13.6.2]{FrizVictoirBook2010} and \cite{Inahama2019a}.
\end{remark}

We now use admissible approximations to extend the horizontal projections to elements in the Wiener space $\WH$. Note that for each $\delta$ and $\omega$  the curve $t \rightarrow B_\delta \left( t \right)$ is in $H_{0} \left( \R^2\right)$, and hence  $t \rightarrow T( B_\delta) \left( t \right) = \left( B_\delta \left( t \right), A_\delta\left( t \right) \right)$ is a well-defined element in $\WH$. By \cite[Theorem 3.3]{Carfagnini2021}
\begin{align*}
\lim_{\delta\rightarrow 0} \E \left[ d_{\WH} \left( T(B_\delta), g \right)^2 \right] =0,
\end{align*}
that is, $T(B_\delta)$ converges to $g$ in $L^2\left(\Omega, \Prob \right)$, and hence it converges in probability as well. Therefore, we can extend $T$ to a map $\widehat{T} : \WR \longrightarrow \WH$ by setting $\widehat{T}(B)_{t}=g_{t}$.

\subsection{Continuous horizontal paths}
\begin{notation}\label{n.horizontalcontinuous}
We denote by $W^\mathcal{H}_{0} := \widehat{T}\left( \WR \right)$ the set of continuous horizontal curves.
\end{notation}

Note that, similarly to $\mathsf{Hor} \left( \Hei \right)$, $W^\mathcal{H}_{0} $ is not a subgroup of $\WH$.

\begin{remark}\label{r.measupreserviso}
Let $\nu := \text{law} \, (B)$ and $\mu := \text{law} \,  (g)$, then
\[
\widehat{T}: \left( \WR , \nu \right)  \longrightarrow \left( W^\mathcal{H}_{0}, \mu \right)
\]
is a measure-preserving isomorphism. Indeed, for any Borel subset $A$ of $W_{0}^{\mathcal{H}}$ we have that
\begin{align*}
\mu (A) = \Prob \left( g \in A \right) = \Prob \left( \widehat{T}(B) \in A \right) = \Prob\left( B \in \widehat{T}^{-1} (A) \right) =\nu \left( \widehat{T}^{-1} (A) \right).
\end{align*}
\end{remark}

\begin{definition}\label{d.horizontalization}
Let $\mathcal{K}$ be given by Equation \eqref{dfn.H}, and $\gamma \in D_{\mathcal{K}}$.  Then we refer to
\begin{equation}\label{e.horizontalization}
\mathcal{K} (\gamma)= T\left( \pi_H (\gamma) \right) = \left( \gamma_1, \gamma_2, \frac{1}{2} \int_{0}^\cdot \omega\left( \gamma(s), \gamma^\prime(s) \right)ds  \right)  \in W_{0}^{\mathcal{H}}
\end{equation}
as the \emph{horizontal version} of $\gamma$.
\end{definition}

We abuse notation and denote by $\mathcal{K}$ the map $\widehat{T} \circ \pi_H : \WH \longrightarrow W_{0}^\mathcal{H}$. Note that $\mathcal{K} (g) = g$. By Proposition~\ref{p.propert.general} we know that $\mathsf{Hor}\left( \Hei \right)$ is not closed under the group multiplication, and neither is $W_{0}^\mathcal{H}$. In particular,  $\varphi^{-1}g \notin W_{0}^\mathcal{H}$ even if $\varphi \in \mathsf{Hor} \left( \Hei \right)$ and $g_{t}= T(B)_{t} \in W_{0}^\mathcal{H}$, but we can consider the stochastic  process
\begin{align}\label{e.horizontalprocess}
& \mathcal{K} \left(\varphi^{-1}g \right)_{t}
\\
& =\left( B_{t}- \pi_{H}(\varphi) \left( t \right), \frac{1}{2} \int_{0}^t \omega \left( B_s -\pi_{H}(\varphi) (s), dB_s - \pi_{H}(\varphi^\prime) (s)ds \right)  \right) \in W_{0}^\mathcal{H},
\notag
\end{align}
which we view as a horizontal version  of ~$\varphi^{-1}\left( t \right)g_{t}$. Using the map $\mathcal{K}$ we can define a semi-metric on $\WH$, namely,
\[
d_{\mathcal{H}} (\gamma, \varphi ) := \max_{0 \leqslant t \leqslant 1} \vert \mathcal{K} (\varphi ^{-1} \gamma ) (t) \vert.
\]
Note that the semi-metric $d_{\mathcal{H}}$ is left-invariant, that is,
\[
d_{\mathcal{H}} ( \varphi^{-1} \gamma, e )= d_{\mathcal{H}} (\gamma, \varphi),
\]
and it is consistent with the underlined geometric structure of $\Hei$.

\section{Onsager-Machlup for a hypoelliptic diffusion on the Heisenberg group}\label{s.OMfunctional}
Now we are ready to find the Onsager-Machlup functional for the horizontal version of $\varphi^{-1}\left( t \right)g_{t}$ which gives us the asymptotics of
\[
\Prob \left( d_{\mathcal{H}} (g , \varphi) <  \varepsilon \right),
\]
as $\varepsilon \rightarrow 0$. Recall that by Proposition~\ref{p.propert.H} the map $\mathcal{K} :D_{\mathcal{K}} \longrightarrow \WH$ is defined on the set $D_\mathcal{K} = \pi_H^{-1} \left( H_{0} \left( \R^2 \right) \right)$.

\begin{theorem}\label{thm.OM}
Let $\Hei$ be the Heisenberg group, and $g_{t}$ be the horizontal Brownian motion starting at the identity. There exists a finite constant $C\left( \varepsilon \right) >0$ only depending on $\varepsilon$ such that, for any $\varphi  \in D_{\mathcal{K}}$,
\begin{equation}\label{e.idk}
\lim_{\varepsilon \rightarrow 0} \frac{1}{ C\left( \varepsilon \right)} \Prob \left(d_{\mathcal{H}} (g , \varphi)< \varepsilon \right) = \exp\left( -\frac{1}{2} \Vert \pi_H ( \varphi) \Vert^2_{H_{0} \left( \R^2 \right)} \right).
\end{equation}
On the other hand, if $\varphi \notin D_{\mathcal{K}}$ then for all $\varepsilon>0$ sufficiently small we have that
\[
\Prob \left( d_{\mathcal{H}} (g , \varphi) < \varepsilon \right)=0.
\]
\end{theorem}
This theorem means that the \emph{Onsager-Machlup functional} is given by
\[
\mathcal{L} (\varphi):=-\frac{1}{2} \Vert \pi_{H} (\varphi) \Vert^2_{H_{0} \left( \R^2 \right)}, \varphi \in D_\mathcal{K}.
\]
If $\varphi \notin D_{\mathcal{K}}$ , then $\mathcal{L} (\varphi) =-\infty$. In particular, we have that $D_{\mathcal{L}} = D_{\mathcal{K}}$.

\begin{remark}\label{rmk.domain}
Let $B_{t}$ be a standard Brownian motion in $\R^{n}$. Then by the Cameron-Martin-Girsanov theorem it is easy to see that the domain of its Onsager-Machlup functional with respect to the sup-norm is given by
\begin{align*}
D_{\mathcal{L}} = H_{0} \left( \R^{n} \right),
\end{align*}
the Cameron-Martin space of finite energy paths. Theorem~\ref{thm.OM} means that the domain of the Onsager-Machlup functional with respect to the semi-metric $d_{\mathcal{H}}$ is the pre-image under $\pi_{H}$ of the domain of the Onsager-Machlup functional of $B_{t}$.
\end{remark}

\begin{remark}\label{rmk.constant}
The constant $C\left( \varepsilon \right)$ in \eqref{e.idk} is explicit. More precisely, in the proof of Theorem ~\ref{thm.OM} we will show that
\[
C\left( \varepsilon \right) = \Prob \left( d_{\mathcal{H}} (g , e) < \varepsilon \right) = \Prob \left( \max_{0\leqslant t \leqslant 	1} \vert g_{t} \vert < \varepsilon \right),
\]
and hence by \cite[Theorem 5.3]{CarfagniniGordina2022b} we have that $C\left( \varepsilon \right) \approx \exp \left( - \frac{\lambda}{\varepsilon^2} \right)$, where $\lambda$ is the spectral gap of $-\frac{1}{2} \Delta_{\mathcal{H}}$ on the unit ball $\{x \in \Hei : \vert x \vert <1 \}$. An explicit upper and lower bound on $\lambda$ can be found in \cite[Theorem 5.6]{CarfagniniGordina2022}. Therefore Theorem~\ref{thm.OM} can be interpreted as giving the following approximation

\begin{align*}
\Prob \left(d_{\mathcal{H}} (g, \varphi ) < \varepsilon \right)  \approx \exp \left( - \frac{\lambda}{\varepsilon^2} \right)  \exp\left(-\frac{1}{2} \Vert \pi_{H} (\varphi) \Vert^2_{H_{0} \left( \R^2 \right)} \right),
\end{align*}
which is consistent with the elliptic setting in Example~\ref{ex.elliptic}.
\end{remark}

\section{Proof of Theorem~\ref{thm.OM}}\label{sec5}
First, we describe  the Maurer-Cartan forms induced by the processes $\varphi^{-1}\left( t \right) g_{t}$ and its horizontal version $\mathcal{K}  \left(\varphi^{-1}g \right)_{t}$.
\begin{proposition}
Let $\varphi\in D_{\mathcal{K}}$ with $\varphi(0)=e\in \Hei$. Then the process $\mathcal{K}  \left(\varphi^{-1}g \right)_{t}$ defined by \eqref{e.horizontalprocess} satisfies the equation
\begin{align}\label{e.SDEg.phi}
& \int_{0}^{t} \theta^l _{x_s}\left( dx_{s} \right) = \left(B_{t} - \pi_{H}(\varphi) \left( t \right), 0 \right),
\\
& x_{0} = e. \notag
\end{align}
Moreover, if $\varphi = \left( \pi_{H}(\varphi), \varphi_{3} \right) \in D_{\mathcal{K}}$ and  $\varphi_3$ is absolutely continuous, then the process $\varphi^{-1}\left( t \right) g_{t}$ satisfies
\begin{align}\label{algebraicSDE}
& \int_{0}^{t} \theta^l _{y_s}\left( dy_{s} \right) =
\left( B_{t} -\pi_{H}(\varphi)\left( t\right),  \frac{1}{2} \int_{0}^{t} \omega \left( B_{s},\pi_{H}(\varphi)^{\prime} (s) \right) ds
\right.
\\&
\left. - \frac{1}{2} \int_{0}^{t} \omega \left( \pi_{H}(\varphi)(s),\pi_{H}(\varphi)^{\prime} (s) \right) ds  -\varphi_{3}\left( t \right)  \right),
\notag
\\
& y_{0} = e.  \notag
\end{align}

\end{proposition}

\begin{proof}
Let us prove \eqref{algebraicSDE}.  We use the definition of the Maurer-Cartan form \eqref{e.MaurerCartanHeis}, and \eqref{MaurerCartanLR}, and the fact that $\varphi$ is absolutely continuous to see that almost everywhere
\begin{align*}
& \theta^l _{y_{t}}\left( dy_{t} \right)=\theta^l _{g_{t}}\left( dg_{t} \right)  - \operatorname{Ad}_{g_{t} }\theta^r _{\varphi\left( t \right)}\left( \varphi^{\prime}\left( t \right)\right)dt
\\
&  = (dB_{t},0) -  \operatorname{Ad}_{g_{t} }\theta^r _{\varphi\left( t \right)}\left( \varphi^{\prime}\left( t \right)\right)dt
\\
& =(dB_{t},0) -  \operatorname{Ad}_{g_{t} }\left( \pi_{H}(\varphi)^\prime\left( t\right), \varphi_{3}^{\prime}\left( t \right) +\frac{1}{2}\omega(  \pi_{H}(\varphi) \left( t \right), \pi_{H}(\varphi)^{\prime}\left( t \right) )\right)dt,
\end{align*}
where $y_{t} = \varphi(t)^{-1} g_{t}$. Now we can use  \eqref{differentials} and the explicit form of the Brownian motion $g_{t}$ \eqref{e.HypoBM} to conclude that

\begin{align*}
& \theta^l _{y_{t}}\left( dy_{t} \right)
\\
& =(dB_{t},0)
\\
& -  \left(  \pi_{H}(\varphi)^\prime\left( t\right), \varphi_{3}^{\prime}\left( t \right) +\frac{1}{2}\omega( \pi_{H}(\varphi)\left( t \right), \pi_{H}(\varphi)^{\prime}\left( t \right) )+\omega(\pi_{H}(\varphi)^{\prime}\left( t \right),  B_{t} )\right)dt
\\
& =(dB_{t},0) -  \left(\pi_{H}(\varphi)^\prime \left( t\right), \varphi_{3}^{\prime}\left( t \right) +\omega\left(\pi_{H}(\varphi)^{\prime} \left( t \right),  B_{t}-\frac{1}{2} \pi_{H}(\varphi)\left( t \right) \right) \right)dt,
\end{align*}
and \eqref{algebraicSDE} follows. Similarly, if $x_{t}:=  \mathcal{K}  \left(\varphi^{-1}g \right)_{t}$ we can show that
\begin{align*}
\theta^l _{x_{t}}\left( dx_{t} \right)= \left( dB_{t} - \pi_{H}(\varphi)^\prime \left( t \right) dt, 0\right),
\end{align*}
which completes the proof.
\end{proof}

\begin{remark}\label{r.sde.girsanov}
By \eqref{algebraicSDE} we see that the Maurer Cartan form of the process $\varphi^{-1}(t)g_{t}$ is given by
\[
 \left( dB_{t} -\pi_{H}(\varphi)^\prime \left( t\right) dt ,- \varphi_{3}^{\prime}\left( t \right)dt - \omega\left( \pi_{H}(\varphi)^{\prime} \left( t \right),  B_{t}-\frac{1}{2} \pi_{H}(\varphi)\left( t \right) \right) dt \right),
\]
and it contains a non-linear drift. On the other hand, the Maurer-Cartan form of $\mathcal{K} \left(\varphi^{-1}g \right)_{t}$ is given by
\[
\left( dB_{t} - \pi_{H}(\varphi)^\prime\left( t \right) dt, 0\right),
\]
which only has a linear drift and allows us to use the Cameron-Martin-Girsanov Theorem as shown in the proof of Theorem~\ref{thm.OM}.
\end{remark}

Before proceeding to the proof of Theorem~\ref{thm.OM}, we need the following lemma whose proof can be found in \cite[ pp. 536-537]{IkedaWatanabe1989}.

\begin{lemma}[pp. 536-537 in \cite{IkedaWatanabe1989}]\label{Lemma3.1}
Let $I_1, \ldots, I_n$ be $n$ random variables on a probability space $\left( \Omega, \mathcal{F}, \Prob \right)$. Let $\left\{  A_\varepsilon \right\}_{0<\varepsilon <1}$ be a family of events in $\mathcal{F}$ and $a_1, \ldots , a_n$ be $n$ numbers. If for every real number $c$ and every $1\leqslant i \leqslant n$
\[
\limsup_{\varepsilon \rightarrow 0} \E \left[ \exp (c\, I_i)\, |  A_\varepsilon   \right] \leqslant \exp (c\,a_i),
\]
then
\[
\lim_{\varepsilon\rightarrow 0} \E \left[ \exp\left(\sum_{i=1}^n I_i\right)  |  A_\varepsilon  \right] = \exp \left( \sum_{i=1}^n a_i \right).
\]
\end{lemma}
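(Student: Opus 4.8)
The plan is to obtain the limit by squeezing it between a $\limsup$ estimate and a $\liminf$ estimate, working throughout with the conditional probability measure $\Prob(\,\cdot \mid A_\varepsilon) := \Prob(\,\cdot \cap A_\varepsilon)/\Prob(A_\varepsilon)$, which makes sense once $\varepsilon$ is small enough that $\Prob(A_\varepsilon) > 0$. First I would record that the hypothesis, applied with $c = 1$ and $c = -1$, forces $\E[e^{I_i} \mid A_\varepsilon]$ and $\E[e^{-I_i}\mid A_\varepsilon]$ to be finite for all small $\varepsilon$; since $|x| \leqslant e^{x} + e^{-x}$, each $I_i$ is then conditionally integrable, so the conditional Jensen inequalities used below are legitimate. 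Write $\E[\,\cdot \mid A_\varepsilon]$ for expectation under $\Prob(\,\cdot \mid A_\varepsilon)$.

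For the upper bound I would factor $\exp\!\big(\sum_{i=1}^n I_i\big) = \prod_{i=1}^n e^{I_i}$ and apply H\"older's inequality under $\Prob(\,\cdot \mid A_\varepsilon)$ with all $n$ exponents equal to $n$:
\[
\E\Big[\exp\Big(\textstyle\sum_{i=1}^n I_i\Big)\,\Big|\,A_\varepsilon\Big] \leqslant \prod_{i=1}^n \E\big[e^{n I_i}\mid A_\varepsilon\big]^{1/n}.
\]
Taking $\limsup_{\varepsilon \to 0}$, using that the $\limsup$ of a product of finitely many nonnegative quantities is at most the product of their $\limsup$'s, and invoking the hypothesis in the form $\limsup_{\varepsilon\to 0}\E[e^{n I_i}\mid A_\varepsilon] \leqslant e^{n a_i}$, yields
\[
\limsup_{\varepsilon \to 0}\E\Big[\exp\Big(\textstyle\sum_{i=1}^n I_i\Big)\,\Big|\,A_\varepsilon\Big] \leqslant \prod_{i=1}^n e^{a_i} = \exp\Big(\textstyle\sum_{i=1}^n a_i\Big).
\]

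For the lower bound I would apply conditional Jensen to the convex function $\exp$ to get $\E[\exp(\sum_i I_i)\mid A_\varepsilon] \geqslant \exp\big(\sum_i \E[I_i \mid A_\varepsilon]\big)$, so it is enough to show $\liminf_{\varepsilon\to 0}\E[I_i\mid A_\varepsilon] \geqslant a_i$ for each $i$. Fixing any $c < 0$, conditional Jensen applied to $x\mapsto e^{cx}$ gives $\E[e^{c I_i}\mid A_\varepsilon] \geqslant \exp(c\,\E[I_i\mid A_\varepsilon])$, hence $\E[I_i\mid A_\varepsilon] \geqslant \tfrac1c \log \E[e^{c I_i}\mid A_\varepsilon]$ (the inequality reverses because $c<0$); letting $\varepsilon\to 0$ and using $\limsup_{\varepsilon\to 0}\E[e^{c I_i}\mid A_\varepsilon]\leqslant e^{c a_i}$ gives $\liminf_{\varepsilon\to 0}\E[I_i\mid A_\varepsilon] \geqslant \tfrac1c\,(c a_i) = a_i$. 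Combining the two bounds shows the limit exists and equals $\exp(\sum_{i=1}^n a_i)$.

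The argument is essentially soft, so I do not expect a genuine conceptual obstacle; the only delicate points are checking conditional integrability of the $I_i$ (so that Jensen applies) and justifying the exchange of $\limsup/\liminf$ with products and with $\log$, which requires knowing the quantities involved remain finite for small $\varepsilon$ — all guaranteed by the hypothesis. The main \emph{work} is simply to present these limit manipulations carefully.
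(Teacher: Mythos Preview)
The paper does not give its own proof of this lemma; it simply records the statement and cites Ikeda--Watanabe. Your argument is correct. The upper bound via H\"older's inequality with all exponents equal to $n$ is exactly the standard approach. Your lower bound --- Jensen to reduce to $\liminf_{\varepsilon\to 0}\E[I_i\mid A_\varepsilon]\geqslant a_i$, followed by a second application of Jensen with a negative exponent $c$ --- is valid, but the route taken in Ikeda--Watanabe is slightly more direct: Cauchy--Schwarz under $\Prob(\,\cdot\mid A_\varepsilon)$ gives
\[
1 = \E\big[e^{\frac12\sum I_i}e^{-\frac12\sum I_i}\,\big|\,A_\varepsilon\big]^2 \leqslant \E\big[e^{\sum I_i}\mid A_\varepsilon\big]\,\E\big[e^{-\sum I_i}\mid A_\varepsilon\big],
\]
and the H\"older upper bound already established, applied to $-I_1,\dots,-I_n$, yields $\limsup_{\varepsilon\to 0}\E[e^{-\sum I_i}\mid A_\varepsilon]\leqslant e^{-\sum a_i}$, whence $\liminf_{\varepsilon\to 0}\E[e^{\sum I_i}\mid A_\varepsilon]\geqslant e^{\sum a_i}$ immediately. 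This bypasses the intermediate control of the conditional means $\E[I_i\mid A_\varepsilon]$ and the separate integrability check you carry out; on the other hand, your argument makes the role of the hypothesis at negative $c$ more transparent. Either way the proof is short and the content is the same.
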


\begin{proof}[ Proof of Theorem~\ref{thm.OM}]
Let $\varphi = (\pi_{H}(\varphi), \varphi_{3} ) \in D_{\mathcal{K}}$, that is,   $\pi_H (\varphi) \in H_{0}\left( \R^2 \right) $. Therefore by the Cameron-Martin-Girsanov Theorem there exists a probability measure $\Q^\varphi$ such that the process $B^\varphi_{t}:=B_{t} +\pi_{H}(\varphi)\left( t \right)$ is a Brownian motion under $\Q^\varphi$. More precisely there exists an exponential martingale $\mathcal{E}^\varphi$ such that
\[
\Q^\varphi (A) = \E \left[ \mathcal{E}^\varphi \mathbbm{1}_A  \right], \; \text{for any} \; A \in \mathcal{F},
\]
where $\mathcal{E}^\varphi= \exp \left( -\int_{0}^1 \langle \pi_{H}(\varphi)^\prime (s), dB_s \rangle_{\R^2} ds  - \frac{1}{2} \int_{0}^1 \vert \pi_{H}(\varphi)^\prime (s) \vert^2_{\R^2} ds \right)$.  Note that
\begin{align*}
& d \left( B_{t} - \pi_{H}(\varphi) \left( t \right) \right) = dB_{t} -  \pi_{H}(\varphi)^\prime \left( t \right) dt, \; \text{and}
\\
& dB_{t} = dB^{\varphi}_{t} - \pi_{H}(\varphi)^\prime \left( t \right) dt,
\end{align*}
that is, the law of $B_{t} - \pi_{H}(\varphi) \left( t \right) $ under $\Prob$ is the same as the law of $B_{t}$ under $\Q^\varphi$. Thus,
\begin{align*}
&\mathcal{K} \left( \varphi^{-1}g \right)_{t}  = \left( B_{t}-\pi_{H}(\varphi)\left( t \right), \frac{1}{2} \int_{0}^t \omega \left( B_s -  \pi_{H}(\varphi) (s), dB_s - \pi_{H}(\varphi)^\prime (s)ds \right)  \right)
\\
& = \left( B^\varphi_{t}, \frac{1}{2}\int_{0}^t \omega \left( B^\varphi _s, dB^\varphi_s \right) \right),
\end{align*}
and hence the law of $\mathcal{K}  \left(\varphi^{-1}g \right)_{t}$  under $\Prob$ is the same as the law of $g_{t}$ under $\Q^\varphi$. It the follows that
\begin{align*}
&\Prob \left( d_{\mathcal{H}} (g , \varphi )  < \varepsilon \right)  = \Prob \left( \max_{0\leqslant t \leqslant 1} \vert  \mathcal{K} \left(\varphi^{-1}g \right)_{t}  \vert < \varepsilon \right)  = \Q^\varphi \left( \max_{0\leqslant t \leqslant 1} \vert g_{t} \vert < \varepsilon \right)
\\
& = \E \left[ \mathcal{E}^\varphi_1  \mathbbm{1}_{\left\{ \Vert g \Vert_{\WH} < \varepsilon \right\}}   \right] = \Prob \left( \max_{0\leqslant t \leqslant 1} \vert g_{t} \vert < \varepsilon \right) \E \left[ \mathcal{E}^\varphi_1 \, \left|  \, \max_{0\leqslant t \leqslant 1} \vert g_{t} \vert < \varepsilon  \right.  \right],
\end{align*}
that is,
\begin{align}\label{e.what.to.prove}
& \frac{ \Prob \left( d_{\mathcal{H}} (g , \varphi ) < \varepsilon \right) }{ \Prob \left( d_{\mathcal{H}} (g , e )  < \varepsilon \right) } =  \E \left[ \mathcal{E}^\varphi_1 \,  \left|  \,  \max_{0\leqslant t \leqslant 1} \vert g_{t} \vert < \varepsilon  \right.  \right].
\end{align}
It is then clear that we can take $ C\left( \varepsilon \right) := \Prob \left(  d_{\mathcal{H}} (g , e )  < \varepsilon \right)  $ in Theorem~\ref{thm.OM}. Let us now prove that
\begin{equation*}
\lim_{\varepsilon \rightarrow 0}  \E \left[ \mathcal{E}^\varphi_1 \,  \left|  \,  \max_{0\leqslant t \leqslant 1} \vert g_{t} \vert < \varepsilon  \right.   \right] = \exp \left( -\frac{1}{2} \Vert \pi_H( \varphi ) \Vert_{H_{0} \left( \R^2 \right)}^2 \right).
\end{equation*}
Since $\mathcal{E}^\varphi= \exp \left( -\int_{0}^1 \langle \pi_{H}(\varphi)^\prime (s), dB_s \rangle_{\R^2} ds  - \frac{1}{2}  \Vert \pi_H ( \varphi )\Vert^2_{ H_{0} \left( \R^2 \right)} \right)$, by Lemma~\ref{Lemma3.1}, it is enough to show that for any real number $c$ and $i=1,2$
\[
\limsup_{\varepsilon \rightarrow 0} \E \left[ \exp\left( - c\int_{0}^1  \pi_{H}(\varphi)^\prime_i (s) dB_i(s) \right) \, \left| \, \max_{0
\leqslant t \leqslant 1} \vert g_{t} \vert < \varepsilon \right]  \leqslant 1.  \right.
\]
This follows by \cite[p. 654]{SheppZeitouni1992} with $C:=\{ \mathbf{x} \, | \, \Vert g \Vert < \varepsilon \}$ since $\pi_{H}(\varphi)\in H_{0} \left( \R^2 \right)$.

If $\varphi \notin D_{\mathcal{K}}$, that is, $\pi_H  (\varphi) \notin  H_{0} \left( \R^2 \right)$, then the laws of $B_{t}$ and  $B_{t} - \pi_H (\varphi) \left( t \right)$ are mutually singular measures on $\WR$, and hence for all $\varepsilon$ sufficiently small we have that
\begin{align*}
\Prob \left( \max_{0 \leqslant t \leqslant 1} \vert B_{t} - \pi_H(\varphi) \left( t \right) \vert < \varepsilon \right) =0,
\end{align*}
since  $\Prob \left( \max_{0 \leqslant t \leqslant 1} \vert B_{t}  \vert < \varepsilon \right) >0$. Now, note that for all $\varepsilon$ sufficiently small
\begin{align*}
\Prob \left( \max_{0 \leqslant t \leqslant 1} \vert  \mathcal{K} \left( \varphi^{-1} g \right)_{t} \vert < \varepsilon \right)  \leqslant \Prob \left( \max_{0 \leqslant t \leqslant 1} \vert B_{t} - \pi_H(\varphi) \left( t \right) \vert < \varepsilon \right) =0,
\end{align*}
proving that $\mathcal{L} (\varphi) = - \infty$ for $\varphi \notin D_{\mathcal{K}}$. Thus, we showed that $\mathcal{L} (\varphi) $  is finite if and only if $\varphi \in D_{\mathcal{K}}$, that is, $D_{\mathcal{L}} = D_{ \mathcal{K}}$.
\end{proof}

\begin{acknowledgement}
The authors would like to thank Nathaniel Eldredge for pointing out a gap in the previous version of the paper, and Lorenzo Dello Schiavo for useful comments. The second author would like to thank Ofer Zeitouni for suggesting this problem during his visit to the University of Connecticut.
\end{acknowledgement}

\providecommand{\bysame}{\leavevmode\hbox to3em{\hrulefill}\thinspace}
\providecommand{\MR}{\relax\ifhmode\unskip\space\fi MR }
\providecommand{\MRhref}[2]{%
  \href{http://www.ams.org/mathscinet-getitem?mr=#1}{#2}
}
\providecommand{\href}[2]{#2}

\end{document}